\documentclass[12pt,leqno]{amsart}
\setlength\topmargin{0mm}
\setlength\textheight{230mm}
\setlength\oddsidemargin{5mm}
\setlength\evensidemargin{5mm}
\setlength\textwidth{150mm}
\usepackage{amssymb, amsmath}
\usepackage{mathtools}
\usepackage{graphics}
\usepackage{eepic}
\usepackage{epic}
\usepackage{bbm}
\usepackage{graphicx,color}
\usepackage{hyperref}

\newtheorem{theorem}{Theorem}[section]
\newtheorem{proposition}[theorem]{Proposition}
\newtheorem{corollary}[theorem]{Corollary}
\newtheorem{lemma}[theorem]{Lemma}

\theoremstyle{definition}

\theoremstyle{remark}
\newtheorem*{remark}{Remark}
%
%

\newcommand{\uT}{\text{\upshape{T}}}
\newcommand{\uP}{\mathcal{{P}}}
\newcommand{\dd}{\text{\upshape{d}}}
\def \E {\mathbb{E}}
\def \P {\mathbb{P}}
\def \N  {\mathbb{N}}
\def \R  {\mathbb{R}}

\def \Z  {\mathbb{Z}}

\newcounter{anc}
\renewcommand{\theanc}{\arabic{anc}}
\newcommand{\cst}{\refstepcounter{anc}\ensuremath{c_{\theanc}}}
%
\let\originalleft\left
\let\originalright\right
\renewcommand{\left}{\mathopen{}\mathclose\bgroup\originalleft}
\renewcommand{\right}{\aftergroup\egroup\originalright}
%
\title[New high-dimensional examples of ballistic RWRE]{
New high-dimensional examples of ballistic  random walks in random environment}

\author{Ryoki Fukushima}
\address{Research Institute for Mathematical Sciences, Kyoto University}
\email{ryoki@kurims.kyoto-u.ac.jp}

\author{Alejandro F.~Ram\'\i rez}
\address{Facultad de Matem\'aticas, Pontificia Universidad Cat\'olica de Chile}
\email{aramirez@mat.uc.cl}


\keywords{Random walk in random environment; small perturbations of simple random walk; ballistic behavior; concentration inequalities}
\subjclass[2010]{60K37, 82D30, 82C41}
\begin{document}
%

\maketitle

\begin{abstract}      
We give new criteria for ballistic behavior of random
walks in random environment which are perturbations of the simple symmetric
random walk on $\mathbb Z^d$ in dimensions $d\ge 4$.
Our results extend those of Sznitman~[Ann.~Probab.~31, no.~1, 285-322 (2003)] and the recent ones of Ram\'\i rez and Saglietti~[Preprint, arXiv:1808.01523], and
allow us to exhibit new examples in dimensions $d\ge 4$ 
of ballistic random walks which do not satisfy Kalikow's condition.  Our criteria implies ballisticity whenever the average of the local drift of the walk 
is not too small compared with an appropriate moment of the
centered environment. The proof relies on a concentration inequality of
Boucheron et al.~[Ann. Probab.~33, no.~2, 514-560 (2005)].
\end{abstract}

\section{Introduction}
A challenging open question about
the model of multidimensional random walk in random environment (RWRE), in the case
of an independent and identically distributed (i.i.d.)~uniformly elliptic environment, is to characterize 
when the random walk is ballistic or not in terms of the law of
the environment at a single site. 

The first result in this direction is~\cite{SZ99}, where it is proved that Kalikow's condition for the directional transience in~\cite{K81} implies ballisticity. Although the condition itself depends on the whole environment, one can derive some sufficient conditions in terms of the law of
the environment at a single site, such as~\cite[(2.36)]{SZ99} and~\cite[Proposition~5.1]{BS04}. 

Beyond Kalikow's condition,
partial answers to the question of ballisticity have been given for the environments which are small 
perturbations of the simple symmetric random walk on $\mathbb Z^d$. 
For dimensions $d\ge 3$, Sznitman~\cite{Sz03} 
derived conditions on the averaged local drift which imply ballisticity.
Essentially, he showed that whenever the averaged local drift in
a given direction is not too small with respect to the $L^\infty$-norm of the perturbation, 
one has ballistic behavior in that direction. As a corollary of his results,
he was able to provide examples of ballistic random walks
in dimensions $d\ge 3$ that do not satisfy
Kalikow's condition. 
An improvement of these results 
was obtained by Ram\'\i rez and Saglietti~\cite{RS18}. 
There, a ballisticity condition in terms of the $L^2$ and $L^\infty$-norms of the perturbation is proved, as well as a sharp sufficient condition for Kalikow's condition. The former condition is shown to improve that in~\cite{Sz03} when $d=3$. 
As a consequence, new examples of ballistic RWRE, which satisfy neither Kalikow's condition nor the conditions in~\cite{Sz03}, were obtained in dimension $d=3$. 

In this article we refine the results of Sznitman \cite{Sz03} in
dimensions $d\ge 4$,  providing a ballisticity condition in terms of $L^{2r}$ and $L^\infty$-norms of the perturbation. 
As a corollary, this gives new
examples of ballistic RWRE which satisfy neither Kalikow's condition nor the conditions in~\cite{Sz03}. 

For more details on earlier works, see Section~\ref{sec:background}.

\section{Background and main results}
\subsection{The model and basic assumptions}
Let $d\ge 2$ and write for each $x\in\mathbb Z^d$ its $\ell^1$-norm as $|x|$. Define $V:=\{x\in\mathbb Z^d:|x|=1\}$
and denote by $\mathcal P$ the set of all probability vectors
$p=(p(e))_{e\in V}$ on $V$. We now consider the environmental
space $\Omega:=\mathcal P^{\mathbb Z^d}$, and call each
element $\omega=(\omega(x))_{x\in\mathbb Z^d}\in\Omega$
an {\it environment}. For each $x\in\mathbb Z^d$, we denote the
components of $\omega(x)$ by $\omega(x,e)$ so that
$\omega(x)=(\omega(x,e))_{e\in V}$. The {\it random walk
in the environment $\omega$} starting from $x\in\mathbb Z^d$
is then defined as the Markov chain $X=(X_n)_{n\ge 0}$ which
starts from $x$, and has the transition probabilities
$$
P_{x,\omega}(X_{n+1}=y+e \mid X_n=y)=\omega(y,e)
$$
for all $y\in\mathbb Z^d$ and $e\in V$. We denote its law by
$P_{x,\omega}$ and call it the {\it quenched law} of the random walk. 
Let now $\mathbb P$ be a probability measure
defined on $\Omega$. Define the semi-direct product 
$P_x:=\mathbb P\otimes P_{x,\omega}$ on $\Omega\times(\mathbb{Z}^d)^{\mathbb N}$
as
$$
P_x(A\times B):=\int_A P_{x,\omega}(B)\dd \mathbb P.
$$
We call $P_x$ the {\it averaged} or {\it annealed} law of
the random walk starting from $x$.

Throughout this article, we will assume that $(\omega(x))_{x\in\mathbb Z^d}$
are i.i.d.~under $\mathbb P$. We will denote by $\mu$ the common law of
$\omega(x)$, $x\in\mathbb Z^d$, that is, $\mathbb P=\mu^{\Z^d}$. We will also assume that each
$\omega(x)$ is a small perturbation of the weights of the simple
symmetric random walk, that is,
\begin{equation}
\label{epsilon-def}
\epsilon=\epsilon(\mu):=4d\left\|
\omega(0)-\left(\frac{1}{2d},\ldots,\frac{1}{2d}\right)\right\|_{L^\infty(\mu)}\in (0,1),
\end{equation}
where for any random vector $v=(v(e))_{e\in V}$, we define its $L^\infty(\mu)$-norm as
$$
||v||_{L^\infty(\mu)}:=\inf\{m>0\colon |v(e)|\le m,\ \mu\text{-a.s.~for all } e\in V\}.
$$
Note that (\ref{epsilon-def}) implies that there exists an event
$\Omega_\epsilon$ with $\mathbb P(\Omega_\epsilon)=1$ such that
for any $\omega\in\Omega_\epsilon$, $x\in\mathbb Z^d$ and $e\in V$,
$$
\left|\omega(x,e)-\frac{1}{2d}\right|\le\frac{\epsilon}{4d}.
$$
As a consequence, it follows that $\mathbb P$ is {\it uniformly elliptic}, that is, 
$\omega(x,e)\ge\kappa$ for all $x\in\mathbb Z^d$ and $e\in V$ with
$$
\kappa:=\frac{1}{4d}.
$$

In the statement of our result, as well as earlier works, we use the {\it local drift of the RWRE at site $x$} defined by 
$$
d(x,\omega):=\sum_{e\in V}\omega(x,e)e,
$$
and the {\it average local drift in direction $e_1$} defined by
$$
\lambda:=\mathbb E[d(0)\cdot e_1].
$$ 
(We will omit $\omega$ when we write an expectation.)

\subsection{Background and earlier works}
\label{sec:background}
We explain the background on the question of ballisticity for RWRE which we address in this paper. 
Let us start by introducing the concepts of directional transience which is closely related to ballisticity. 
Given $l\in\mathbb S^{d-1}:=\{x\in\R^d\colon |x|=1\}$, one says that the
random walk is {\it transient in direction $l$} if
$$
\lim_{n\to\infty} X_n\cdot l=\infty\qquad P_0\text{-a.s.}
$$
On the other hand, one says that it is {\it ballistic in direction $l$}
if it satisfies
$$
\liminf_{n\to\infty}\frac{X_n\cdot l}{n}>0\qquad P_0\text{-a.s.}
$$
It is well-known that any  random walk in an i.i.d.~uniformly elliptic environment which is ballistic in direction $l$ satisfies
\begin{equation}
 \lim_{n\to\infty}\frac{X_n}{n}=v_\P,
\label{LLN}
\end{equation}
where $v_\P$ is deterministic and $v_\P\cdot l>0$. See, for example,~\cite[Theorem~12 and Remark~6]{DR14}. 
An open question about the RWRE model is whether in dimensions $d\ge 2$, 
every random walk in an i.i.d.~uniformly elliptic environment which is
transient in a given direction, is ballistic (see~\cite[p.243]{Sz04} for example). 

Historically, Kalikow had found a sufficient condition for the directional 
transience in \cite{K81} and that condition was later proved to imply 
ballisticity in~\cite{SZ99}, where it is called \emph{Kalikow's condition}.
In order to understand the relation between directional transience and ballisticity, several quantitative conditions for the directional transience have been introduced. 
In~\cite{Sz01, Sz02}, Sznitman defined the so-called conditions $(\uT)$ and $(\uT')$:
for $\gamma\in (0,1]$ and $l\in\mathbb S^{d-1}$, we say that condition
$(\uT)_\gamma|l$ is satisfied if there exists an open set $O\subset\mathbb S^{d-1}$ which contains $l$, such that for every $l'\in O$, 
$$
\limsup_{L\to\infty}\frac{1}{L^\gamma}\log P_0\left(X_{T_{U_{L,l'}}}\cdot l'<0\right)<0,
$$
where 
$$
U_{L,l'}:=\{x\in\mathbb Z^d: -L\le x\cdot l'\le L\}.
$$
Condition $(\uT)|l$ is nothing but $(\uT)_1|l$, while we say that condition $(\uT')|l$ is satisfied if $(\uT)_\gamma|l$ is satisfied for all $\gamma\in (0,1)$. 
In \cite{BDR14}, for each $M\ge 1$ and $l\in\mathbb S^{d-1}$,
the polynomial condition $(\uP)_M|l$ was introduced,
which is essentially defined as  the requirement that the probability
$P_0(X_{T_{U_{L,l'}}}\cdot l'<0)$ decays like $L^{-M}$ for
$l\in O$, where $O\subset\mathbb S^{d-1}$ contains $l$.
It is currently known that 
conditions $(\uP)_M|l$ for $M\ge 15d+5$, $(\uT)_\gamma|l$ for $\gamma\in (0,1)$,
$(\uT')|l$ and $(\uT)|l$ are all equivalent (see \cite{Sz02,BDR14,GR18}) and they imply~\eqref{LLN}. In this paper, we write $(\uP)$ for $(\uP)_{15d+5}|e_1$ to simplify the notation. 

Now we turn to the earlier works more directly related to our results. 
First, although it is natural to expect that the condition $\E[d(0)\cdot e_1]>0$ implies ballisticity in direction $e_1$, it is not the case since in~\cite{BSZ03}, examples of random walks with $\E[d(0)\cdot e_1]>0$ and $v_\P=0$ or even $v_\P\cdot e_1<0$ are given. This illustrates a complicated nature of the ballisticity question. 
On the other hand, it was shown in \cite{Sz03}, improving upon~\cite{Sz02}, that such a pathology does not occur when the environment is a small perturbation of simple symmetric random walk. 
More precisely, for any $\eta\in (0,1)$, there exists some 
$\epsilon_0=\epsilon(\eta,d)\in (0,1)$ such that if $\epsilon\le\epsilon_0$ (recall~\eqref{epsilon-def}) and
\begin{equation}
\label{e34}
\lambda:=\E[d(0)\cdot e_1]\ge
\begin{cases}
\epsilon^{2.5-\eta} &\qquad {\rm if}\quad d=3,\\
\epsilon^{3-\eta} &\qquad {\rm if}\quad d\ge 4,
\end{cases}
\end{equation}
then the random walk satisfies condition $(\uT')$ (and hence $(\uT)$) in direction $e_1$. Subsequently in \cite{RS18},
an improvement of this result was obtained comparing $\lambda$
to the variance of the environment at a given site.
To state this extension, 
we define $\tilde\delta(0,e):=\omega(0,e)-\mathbb E[\omega(0,e)]$ and
\begin{equation} 
 \sigma_{2r}:=\left(\sum_{e\in V}
\E\left[\tilde\delta(0,e)^{2r}\right]
  \right)^{\frac{1}{2r}} \le (2d)^{\frac{1}{2r}}\epsilon, 
\label{eq:sigma_def}
\end{equation}
where the inequality holds when $\P(\Omega_\epsilon)=1$. 
Note that by the H\"older inequality, we have $\sigma_2\le (2d)^{\frac{r-1}{r}}\sigma_{2r}$.
One of the main results in~\cite{RS18} says that if $d=3$, for any $\eta\in (0,1)$
there exists $\epsilon_0=\epsilon_0(\eta)\in (0,1)$ such that
if $\epsilon\le\epsilon_0$ and
$$
\lambda\ge\sigma_2\epsilon^{1.5-\eta},
$$
then condition $(\uP)$ is satisfied. 
It is natural to expect that
an improvement of Sznitman's result in~\cite{Sz03} should be also
possible in dimensions $d\ge 4$, replacing the right-hand
side of~\eqref{e34} by some quantity similar to the variance.
This is the content of our main result that we present in the next section.

\subsection{Main results}
\begin{theorem}
  \label{d4}
  Suppose $d\ge 4$ and $\P(\Omega_\epsilon)=1$. Then, for every $r\ge 144d^2$, there
  exists an $\epsilon_0(r)\in (0,1)$ such that if
  $\epsilon\le\epsilon_0$ and
\begin{equation}
\lambda:=\E[d(0\cdot e_1)]\ge\sqrt{r}\sigma_{2r}\epsilon^{2-\frac{1}{\sqrt{r}}},
\label{eq:assum}
\end{equation}
then condition $(\uT)$ holds in direction $e_1$. In particular,
$X$ is ballistic in direction $e_1$.
\end{theorem}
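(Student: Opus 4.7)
The plan is to adapt the general framework of Sznitman~\cite{Sz03} and its sharpening by Ram\'\i rez and Saglietti~\cite{RS18}, replacing the $L^2$-type control of the random environment used in those works by a moment bound of order $2r$, for which the concentration inequality of Boucheron, Bousquet, Lugosi and Massart mentioned in the abstract is tailored. As usual, I would deduce $(\uT)|e_1$ from a quantitative effective drift estimate on the slab $U_{L,e_1}$, namely the bound
\[
  E_{0,\omega}[X_T\cdot e_1]\ge \tfrac12\,\lambda\, E_{0,\omega}[T]
\]
holding on a set of environments of $\P$-probability close to one, where $T$ denotes the exit time from $U_{L,e_1}$ and $L$ is an appropriate power of $\epsilon^{-1}$.

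Using the killed Green's function $g_\omega(\cdot,\cdot)$ of the walk on $U_{L,e_1}$, the identity
\[
  E_{0,\omega}[X_T\cdot e_1]-\lambda\,E_{0,\omega}[T] = \sum_{y\in U_{L,e_1}} g_\omega(0,y)\bigl(d(y,\omega)\cdot e_1-\lambda\bigr)
\]
reduces the problem to a random sum whose summands have zero mean. Expanding $g_\omega$ as a Neumann series around the Green's function $g$ of the simple symmetric random walk, in the local perturbation $\tilde\delta(y,e):=\omega(y,e)-\E[\omega(y,e)]$, one obtains a family of multilinear terms
\[
  S_k := \sum_{y_1,\dots,y_k} g(0,y_1)g(y_1,y_2)\cdots g(y_{k-1},y_k)\prod_{i=1}^{k}\tilde\delta(y_i,e_i), \qquad k\ge 1.
\]

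For the leading term $S_1$, the Boucheron--Bousquet--Lugosi--Massart moment inequality yields a bound of the rough form
\[
  \E\bigl[|S_1|^{2r}\bigr]^{1/(2r)}\le C\sqrt{r}\,\sigma_{2r}\left(\sum_{y\in U_{L,e_1}}g(0,y)^2\right)^{1/2}.
\]
The crucial geometric input is that in dimensions $d\ge 4$ the sum $\sum_y g(0,y)^2$ is bounded by $C\log L$ when $d=4$ and by an absolute constant when $d\ge 5$, whereas in $d=3$ it grows like $L$; this is precisely why the method succeeds in $d\ge 4$ but fails in lower dimensions, and it is what gives rise to the variance-type quantity $\sigma_{2r}$ in~\eqref{eq:assum}. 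Markov's inequality converts this moment bound into a polynomial tail estimate on $S_1$, while the higher order terms $S_k$ for $k\ge 2$ are absorbed at the cost of one additional power of $\epsilon$ per extra convolution, via the $L^\infty$ bound $\|\tilde\delta\|_{L^\infty(\mu)}\le \epsilon/(4d)$. Together, these ingredients yield the effective drift estimate on a set of $\P$-probability at least $1-L^{-M}$ for $M$ as large as desired, provided~\eqref{eq:assum} holds and $\epsilon\le\epsilon_0(r)$.

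Once the effective drift inequality is in hand, the atypical quenched exit estimate of~\cite{Sz03} gives the polynomial condition $(\uP)$, and the equivalences proved in~\cite{BDR14,GR18} deliver $(\uT)|e_1$. The main obstacle, in my view, is the combinatorial bookkeeping of the Neumann expansion: one has to balance the growth in $r$ of the Boucheron et al.\ moment bound against the multiplicity and size of the higher order terms $S_k$, and it is this balancing that ultimately fixes both the threshold $r\ge 144d^2$ and the exponent $2-\frac{1}{\sqrt{r}}$ appearing in~\eqref{eq:assum}.
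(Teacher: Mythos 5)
Your overall framework matches the paper's: reduce ballisticity (via Sznitman's effective criterion) to positivity and concentration of a Green functional of the local drift, and use the Boucheron--Bousquet--Lugosi--Massart moment inequality to gain the $\sigma_{2r}$ factor, with the dimension restriction $d\ge 4$ stemming from the summability of squared Green function values. However, your organization of the concentration step is genuinely different from the paper's and, as written, has a gap. The paper applies BBLM \emph{directly} to the full nonlinear functional $Z=G_U[d\cdot e_1](0,\omega)$, viewed as a function of the independent variables $\omega(x_n)$; the one-variable difference estimate $|\Gamma_n(\omega,\omega')|\le c\,g_{0,U}(0,x_n)^{1/(2-\alpha)}\sum_e|\omega(x_n,e)-\omega'(x_n,e)|$ (Sznitman's~(3.47)) controls simultaneously the change in $d(x_n,\cdot)$ and the resulting change in the quenched Green function $g_\omega$, and this is what feeds into $V_\pm$. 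By contrast, you propose to expand $g_\omega$ in a Neumann series first and apply BBLM only to the linear term $S_1$; but for a linear functional of independent variables one does not need BBLM at all (Rosenthal/Marcinkiewicz--Zygmund suffices), and the whole point of using BBLM here is precisely to handle the $\omega$-dependence of $g_\omega$ in one stroke. Your treatment of the higher-order terms $S_k$, $k\ge 2$, is where the plan is too optimistic: the naive $L^\infty$ bound $\|\tilde\delta\|_\infty\le\epsilon/(4d)$ does not by itself give geometric decay in $k$, because each convolution against $g$ costs roughly a factor $\sum_y g(x,y)\asymp L^2\asymp\epsilon^{-2}$; the actual gain per order is of size $\epsilon L$ (not $\epsilon$), and this is only a fixed constant $<1$ at the operating scale $L=c_1(d,r)\epsilon^{-1}$, so the Neumann remainder is the same order as the leading term and must be tracked carefully. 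Sznitman's Proposition~2.3 provides exactly the gradient/difference-operator structure needed to control this, and the paper's route imports it once, inside the BBLM difference estimate, rather than at every order of a multilinear expansion. Finally, you only discuss the drift (``$\hat\rho$'') half of Lemma~\ref{coro}; the paper also verifies the $p$-bound on the displacement variable $\Delta(x,\omega)$ via the same Green-function concentration, and that step is required to invoke Sznitman's Theorem~1.1. In short: correct ingredients and the correct reason $d\ge 4$ helps, but the concentration inequality should be applied to the full functional rather than to the linear term of a Neumann expansion, and the higher-order terms cannot be dismissed by an $L^\infty$ bound alone.
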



Theorem~\ref{d4} gives new examples (apart from those already given
in \cite{Sz03}) of ballistic random walks in dimensions $d\ge 4$
which satisfy $(\uT)$ but not Kalikow's condition.


\begin{corollary}
\label{coro1}
Given $\epsilon_0>0$, there exist RWRE in dimensions $d\ge 4$ satisfying~\eqref{eq:assum} and such that
\begin{enumerate}
\item[(i)] $\P(\Omega_\epsilon)=1$ for $\epsilon\le\epsilon_0$ and $\lambda:=\E[d(0)\cdot e_1]\le\epsilon^3$,
\item[(ii)] $(\uT)$ holds in direction $e_1$ but
Kalikow's condition fails in all directions.
\end{enumerate}
\end{corollary}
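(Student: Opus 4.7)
The plan is to construct, for every sufficiently small $\epsilon$, an explicit i.i.d.\ environment $\mu=\mu_\epsilon$ of sparse-trap type. Structurally the construction imitates the trap-based examples of~\cite{Sz03} and~\cite{RS18}, but with a probability parameter $p=p(\epsilon)$ chosen polynomially smaller in $\epsilon$, so that $\sigma_{2r}$ drops well below $\epsilon$ while traps of full perturbation size $\epsilon/(4d)$ remain. Concretely, independently at each site, set $\omega(x)$ equal to the uniform weights plus a small deterministic tilt toward $e_1$ with probability $1-p$, and, with probability $p$, equal to a trap configuration drawn uniformly from a symmetric finite family of perturbations of magnitude $\epsilon/(4d)$ whose drifts cover every direction in $V$. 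By the symmetry of the trap family $\lambda$ is essentially the deterministic tilt, and one chooses the tilt of order $\epsilon^3$ so that $\lambda\le\epsilon^3$ and $\P(\Omega_\epsilon)=1$ by construction, establishing~(i).

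Next I verify~\eqref{eq:assum}. Since $\tilde\delta(0,e)$ has magnitude of order $\epsilon/(4d)$ only on the trap event (probability $p$) and is otherwise of order $p\epsilon$, one obtains
\[
\sigma_{2r}^{2r}\;\lesssim\;p\,\epsilon^{2r},\qquad\text{hence}\qquad \sigma_{2r}\;\lesssim\;p^{1/(2r)}\epsilon.
\]
Choosing $p=\epsilon^{2\sqrt r}$ yields $\sqrt r\,\sigma_{2r}\,\epsilon^{2-1/\sqrt r}\lesssim\sqrt r\,\epsilon^{3}$, so after tuning the deterministic tilt above this value but below $\epsilon^3$, the hypothesis of Theorem~\ref{d4} is satisfied. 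That theorem then produces $(\uT)$ in direction $e_1$.

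For~(ii), given $l\in\mathbb S^{d-1}$, I would follow the trap argument of~\cite{Sz03}: select a direction $e$ in the trap family with $e\cdot l\ge 1/\sqrt d$, and a bounded box $U_l\ni x_l$ in which a trap with drift $-c\epsilon e$ can sit adjacent to $x_l$. The Kalikow quotient $\hat d_{U_l}(x_l)$ is dominated by such trap configurations, because on the trap event the random walk is retained near $x_l$ and the Green function $G_{U_l}(x_l,x_l,\omega)$ is amplified by a factor that overwhelms the trap probability $p$; the resulting $\hat d_{U_l}(x_l)\cdot l$ is non-positive, defeating Kalikow's condition in direction $l$. Since the trap family covers every direction in $V$, this can be arranged simultaneously for every $l\in\mathbb S^{d-1}$.

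The delicate point is this last step: the box $U_l$ must be large enough for the trap-induced Green-function amplification to beat the factor $p$ in the Kalikow numerator, yet small enough that the bulk contribution does not overturn the sign of the quotient. Because $p=\epsilon^{2\sqrt r}$ is extremely small for $r\ge 144d^2$ (so that $2\sqrt r\ge 24d$), one needs $|U_l|$ polynomial of high degree in $\epsilon^{-1}$; the corresponding Green-function estimates are of the type developed in~\cite[Section~4]{Sz03} and~\cite[Section~4]{RS18}, and I expect their transfer to the present parameter regime to be the main technical hurdle, requiring careful book-keeping of exponents but no conceptually new ingredient.
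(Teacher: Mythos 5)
Your construction does not work: it actually \emph{satisfies} Kalikow's condition rather than violating it. The difficulty is with the choice of an extremely small trap probability $p=\epsilon^{2\sqrt r}$. With that choice one has $\sigma_2\asymp p^{1/2}\epsilon=\epsilon^{1+\sqrt r}$, so $\sigma_2^2\asymp\epsilon^{2+2\sqrt r}$, which for $\epsilon$ small is vastly smaller than $\lambda\asymp\epsilon^3$. But the paper already records (in the proof of Theorem~\ref{d4}) that the inequality $\lambda\ge 4d(1+9\epsilon)\sigma_2^2$ together with $\P(\Omega_\epsilon)=1$ is, by~\cite[Theorem~3]{RS18}, a \emph{sufficient} condition for Kalikow's condition. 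Your environment trivially satisfies $\lambda\gg\sigma_2^2$, so Kalikow's condition holds, defeating claim~(ii). In other words, to produce an example that is ballistic via Theorem~\ref{d4} but still escapes Kalikow, the variance $\sigma_2$ must be \emph{not too small} (roughly, $\sigma_2^2\gtrsim\lambda$), so that the ballisticity certificate really comes from~\eqref{eq:assum} rather than from the Kalikow criterion. Making the trap extremely sparse pushes you into precisely the regime the paper excludes. No fine-tuning of Green-function estimates can rescue the trap argument in part~(ii), because the conclusion is wrong, not just technically hard.

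The paper's actual construction works the other way. It does not shrink the trap probability; instead it takes $\omega(0,e)=p(e)+\tfrac{\lambda}{2}\,e\cdot e_1$ with $p(\cdot)$ drawn from an isotropic law $\hat\mu$ whose variance satisfies ${\rm Var}_{\hat\mu}(p(e_1))-{\rm Cov}_{\hat\mu}(p(e_1),p(-e_1))\ge\rho\,\sigma_2(\hat\mu)>0$ and whose moments are arranged so that $k_2\,\epsilon(\hat\mu)^{1.9}\le\sigma_2\le(2d)^{(r-1)/r}\sigma_{2r}\le k_3\,\epsilon(\hat\mu)^{1.1}$. With $\sigma_{2r}$ only slightly below $\epsilon$, the threshold $\lambda_0=\sqrt r\,\sigma_{2r}\epsilon^{2-1/\sqrt r}$ is of order $\epsilon^{3+}$, so $\lambda$ can be chosen with $\lambda_0\le\lambda\le\epsilon^3$, and simultaneously $\lambda\ll\sigma_2^2\asymp\epsilon^{2.2}$; the latter is exactly what~\cite[Theorem~5]{RS18} requires to deduce failure of Kalikow's condition, bypassing any direct Green-function trap estimate. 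You should replace the sparse-trap mechanism by this ``not-too-small-variance'' mechanism and invoke~\cite[Theorem~5]{RS18} rather than hand-constructing the Kalikow failure.

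A secondary numerical issue in part~(i): with $p=\epsilon^{2\sqrt r}$ and trap magnitude $\epsilon/(4d)$, one gets $\lambda_0\approx\tfrac{\sqrt r}{4d}(2d)^{1/(2r)}\epsilon^3\ge 3\epsilon^3$ for $r\ge 144d^2$, so there is no $\lambda$ with $\lambda_0\le\lambda\le\epsilon^3$. Shrinking $p$ fixes this but only makes the Kalikow problem above worse, confirming that the sparse-trap route is a dead end here.
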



As in~\cite{Sz03} and~\cite{RS18}, such an example can be constructed
by first fixing $\rho\in (0,1]$ and by
setting $\mu$ to be the law of the random probability $\omega(0)$
on $\mathcal P$ given by
$$
\omega(0,e)=p(e)+\frac{\lambda}{2} e\cdot e_1 \quad \text{ for }e\in V,
$$
where $\lambda>0$ is a constant to be specified later and $(p(e))_{e\in V}$
is a random probability vector with distribution $\hat\mu$ on $\mathcal P$
which is isotropic (i.e., invariant under rotations of $\mathbb R^3$
which preserve $\mathbb Z^3$) and such that
$$
{\rm Var}_{\hat\mu}(p(e_1))-{\rm Cov}_{\hat\mu}(p(e_1),p(-e_1))\ge \rho\sigma_2(\hat\mu)>0.
$$
It is not difficult to check that given $r\ge 144 d^2$, one can choose
constants $k_1,k_2,k_3>0$ (depending only on $r$, $\epsilon_0$ and $\rho$)
such that
\begin{align*}
&\epsilon(\hat \mu)\le k_1\epsilon_0 \quad \textrm{and}\\
&k_2\epsilon (\hat\mu)^{1.9}\le \sigma_2\le (2d)^{\frac{r-1}{r}}\sigma_{2r}\le k_3\epsilon(\hat\mu)^{1.1}.
\end{align*}
We can now choose $\lambda$ so that both \eqref{eq:assum} and condition $(i)$ of Corollary \ref{coro1}
is satisfied. The fact that Kalikow's condition is not satisfied
is a consequence of~\cite[Theorem~5]{RS18}.

\section{Proof of Theorem \ref{d4}}
Let us first explain the outline of the proof, which largely follows the arguments in~\cite{Sz03} and~\cite{RS18}. 

In Section~\ref{green}, we introduce some notation and preliminary results. In particular, we quote a sufficient condition for $(\uP)$ from~\cite{RS18}, which we will verify in order to prove ballisticity. It is given in terms of an exit distribution from a large box. In~\cite{Sz03}, Sznitman developed a renormalization argument to reduce the problem to two conditions on $\hat\rho$ and $p$ defined below, which are of perturbative nature. We will not reproduce the argument but the result in Lemma~\ref{coro}. 

In Section~\ref{conclusion}, we show how to apply Lemma~\ref{coro} in our setting. One of the condition (on $p$) follows from relatively weak $L^\infty$-control on the perturbation. The other condition (on $\hat\rho$) is about a uniform positivity of the Green operator applied to the local drift in direction $e_1$, which we will denote by $G_U[d\cdot e_1](0)$. Assuming two propositions on this quantity, we complete the proof of ballisticity. 

In Section~\ref{lowergreen}, we prove the two propositions left unproved in the previous section. The first says that the mean value $\E[G_U[d\cdot e_1](0)]$ is positive and not too small, and the second says that $G_U[d\cdot e_1](0)$ is \emph{concentrated} around the mean. For the first one, we use the same bound as in~\cite{RS18}. For the second, we use a concentration inequality due to Boucheron, Bousquet, Lugosi and Massart \cite{BBLM05}, which is a high moment analogue of the Efron-Stein inequality (see also \cite{BLM13}). 
Previously, this concentration bound is proved by the Azuma--Hoeffding inequality in~\cite{Sz03}, and its variation that takes into account the variance in~\cite{RS18}. 

\subsection{Notation and preliminaries}
\label{green}
In this section, we introduce some notations and preliminary results which will be used in the proof. For $A\subset\Z^d$, we denote its outer boundary by 
\begin{equation}
 \partial A:=\{y\in\Z^d\setminus A\colon |y-z|=1 \text{ for some }z\in A\}
\end{equation}
and the first exit time of the random walk from $A$ by 
$T_A:=\inf\{n\ge 0\colon X_n \not\in A\}$.

In order to state a sufficient condition for $(\uP)$, we define a box and its frontal side for $M\in\N$ (to be fixed later) by 
\begin{align*}
 B&:=\left(-M, M\right)\times\left(-\frac14 M^3, \frac14 M^3\right)^{d-1},\\
\partial_+ B&:=\left\{x\in \partial B:x\cdot e_1\ge M\right\}.
\end{align*}
Following the argument in~\cite[(23) and (24)]{RS18}, we find that if 
\begin{equation}
P_0\left(X_{T_B}\not\in \partial_+ B\right)<\frac{1}{M^{15d+5}}
\text{ for some }M\ge \exp\{100+4d(\log \kappa)^2\},
\label{effective}
\end{equation} 
then condition $(\uP)$ holds. 

The condition~\eqref{effective} is a kind of \emph{effective} (i.e., finite volume) criterion but it is still hard to check in practice. We quote a result from~\cite{Sz03} that gives a bound on the left-hand side of~\eqref{effective} in terms of more explicit functional of $\omega$. To this end, note first that by using the quenched exit probability 
$$
q_B(\omega):=P_{0,\omega}(X_{T_B}\notin\partial_+ B)
\qquad {\rm and}\qquad \rho_B(\omega):=
\frac{q_B(\omega)}{1-q_B(\omega)},
$$
we can bound
\begin{equation}
  \label{sqrtrho}
P_0(X_{T_B}\notin\partial_+ B)=\mathbb E[q_B]\le
\mathbb E\left[\sqrt{\rho_B}\right].
\end{equation}
To estimate the right-hand side of (\ref{sqrtrho}), we are going to use~\cite[Theorem~1.1]{Sz03} which requires further notation. 
Let us first introduce the second \emph{mesoscopic} scale $L\in\N$ 
and define the slab 
$$
U:=U_{L,e_1}=\{y\in\mathbb Z:-L\le y\cdot e_1<L\}. 
$$
We define the Green operator on $L^\infty(U)$ by the formula
$$
G_U[f](x,\omega):= 
E_{x,\omega}\left[\sum_{n=0}^{T_U-1}f(X_n)\right],
$$
and then
\begin{equation*}
\hat\rho:=\sup\left\{\frac{1-\frac{1}{L}G_U[d\cdot e_1](x)}
  {1+\frac{1}{L}G_U[d\cdot e_1](x)}\colon x\cdot e_1=0, \sup_{2\le j\le d}
  |x\cdot e_j|<\frac{1}{4} M^3\right\}. 
\end{equation*}
It is simple to check that $\epsilon<\frac{1}{4d}$ implies $\sup_{x,\omega\in\Omega_\epsilon}E_{x,\omega}[T_U]<\infty$ and hence $G_U$ and $\hat\rho$ are well-defined. 
Let us next fix positive integers $h$ and $H$ satisfying $2h \le H \le \frac{1}{32}M^3$ (cf.~\cite[(1.9)]{Sz03}). Then define the stopping time 
$$
S:=\inf_{n\ge 0}\left\{|(X_n-X_0)\cdot e_1|\ge L\ {\rm or}\ \sup_{2\le
    j\le d}|(X_n-X_0)\cdot e_j|\ge h\right\}
$$
and the displacement variable
$$
\Delta(x,\omega):=E_{x,\omega}[X_S]-x.
$$
Finally define for $\gamma_1\in (0,1]$,
$$
p:=\inf_{j\ge 2}\mathbb P\left(\min_{z\in\tilde B_j}
\Delta(z,\omega)\cdot e_1\ge\gamma_1 L 
\right),
$$
where for $2\le j\le d$,
$$
\tilde B_j:=\left\{y\in B: |y\cdot e_j|<H\right\}.
$$
Now we can state a bound on $\E[\sqrt{\rho_B}]$ in terms of $\hat\rho$ and $p$. 
Note that our $M$ corresponds to $NL$ in~\cite{Sz03}.
\begin{lemma}
[Theorem~1.1 in~\cite{Sz03}]
  \label{coro}
Let $\bar{M}:=[M^3/(32H)]$ and assume that
  $$
  \delta^{-1}:=\exp\left\{-\frac{\gamma_1 M}{32L}\right\}
    +
    \frac{10 M}{\gamma_1 L}
    \exp\left\{-\frac{\gamma_1
        M}{32 L}\left(\frac{H L}{2hM}-\frac{4}{\gamma_1}
        \right)^2_+\right\}
    <1.
$$
Then
$$
\mathbb E[\sqrt{\rho_B}]
\le \kappa^{-2}\left(\frac{2\mathbb 
  E[\hat\rho]^{\frac{M}{2L}} }{(1-\E[\hat\rho]^{\frac{1}{2}})_+}
+
2d\kappa^{-\frac{M}{2}}\exp\left\{-\frac{\bar
    M}{2}\left(p-\frac{7M}{\bar M}\frac{\log\kappa^{-1}}{\log\delta}
    \right)^2_+
\right\}\right).
$$
\end{lemma}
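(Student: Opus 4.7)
The plan is to follow Sznitman's renormalization scheme to bound $\E[\sqrt{\rho_B}]$. Decompose the failure event $\{X_{T_B}\notin\partial_+ B\}$ by the exit mechanism: the walk either leaves $B$ through the back face $\{x\cdot e_1=-M\}$, or through one of the lateral faces $\{|x\cdot e_j|=M^3/4\}$, $j\ge 2$. Writing the corresponding quenched probabilities as $q_{\mathrm{back}}$ and $q_{\mathrm{lat}}$, we have $q_B\le q_{\mathrm{back}}+q_{\mathrm{lat}}$, and a one-step ellipticity bound $1-q_B\ge\kappa$ gives $\sqrt{\rho_B}\le \kappa^{-1}\sqrt{q_B}\le \kappa^{-1}(\sqrt{q_{\mathrm{back}}}+\sqrt{q_{\mathrm{lat}}})$. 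After taking expectations, these two summands will yield the two terms of the conclusion.

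For the back-exit contribution, introduce the ladder stopping times $\tau_k:=\inf\{n\ge 0:X_n\cdot e_1=kL\}$. Between consecutive levels the walk performs a slab excursion in a translate of $U$. Optional stopping on the martingale $X_n\cdot e_1-\sum_{j<n}d(X_j,\omega)\cdot e_1$ at $T_U$ identifies the quenched single-slab back-crossing probability as $\rho_U(x,\omega)=(1-m(x))/(1+m(x))+O(L^{-1})$ with $m(x):=L^{-1}G_U[d\cdot e_1](x)$, and hence bounded by $\hat\rho$ whenever the starting point lies in the transverse bulk of $B$. Summing over the possible last-return level before the walk hits $\{x\cdot e_1=-M\}$, and using that environments in disjoint translates of $U$ are independent under $\P$, a Cauchy--Schwarz bound $\E[\sqrt{\rho_{U_1}\cdots\rho_{U_k}}]\le \E[\hat\rho]^{k/2}$ together with the geometric sum $\sum_{k\ge 0}\E[\hat\rho]^{k/2}=(1-\E[\hat\rho]^{1/2})^{-1}$ produces the first summand in the statement.

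For the lateral-exit contribution, inductively define $S_0:=0$, $S_{i+1}:=S_i+S\circ\theta_{S_i}$, and $Y_i:=X_{S_i}$. Each $S$-step displaces the transverse coordinates by at most $h$, so a lateral exit requires at least $\bar M=[M^3/(32H)]$ such steps. The definition of $p$ states that at each $Y_i$ lying in a transverse strip $\{|x\cdot e_j|<H\}$, the annealed probability of $\Delta(Y_i,\omega)\cdot e_1\ge\gamma_1 L$ is at least $p$. Applying a Hoeffding--Azuma bound to the martingale-difference decomposition of $\sum_{i<\bar M}(X_{S_{i+1}}-X_{S_i})\cdot e_1$ then yields a sub-Gaussian tail of the form $\exp\{-\bar M p^2/2\}$ for the event that $\bar M$ such steps pass without an $e_1$-advance of $M$. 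The correction $\frac{7M}{\bar M}\frac{\log\kappa^{-1}}{\log\delta}$ inside the exponential absorbs the annealed probability that an individual $S$-step makes an atypically long backward or transverse jump, which is exactly what the hypothesis $\delta^{-1}<1$ controls; the prefactor $\kappa^{-M/2}$ pays an ellipticity cost for positioning the walk's starting point.

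The main obstacle is establishing enough independence among the successive slab-crossing ratios $\rho_{U_i}$ and the successive displacement increments $\Delta(Y_i,\omega)\cdot e_1$. Since the walk may revisit cells, neither sequence is genuinely independent under $\P$. This is handled by coupling each increment to one built from fresh i.i.d.~environment, with coupling error controlled by uniform ellipticity (producing the overall $\kappa^{-2}$ prefactor) and by $\delta^{-1}<1$ rendering the rare \emph{bad} coupling events negligible on the timescale $\bar M$. The bookkeeping needed to match the precise constants in the statement is technical but mechanical once the coupling is in place.
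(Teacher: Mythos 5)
First, a point of reference: the paper does not prove this statement at all --- Lemma~\ref{coro} is quoted verbatim as Theorem~1.1 of~\cite{Sz03}, so the only honest comparison is with Sznitman's renormalization argument. Your sketch reproduces the two-term shape of the bound (slab-crossing term plus lateral/strip term), but at the points where the real work happens it either asserts what must be proved or describes a mechanism different from the one that actually produces the stated constants.

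Concretely: (a) the opening step $1-q_B\ge\kappa$, hence $\sqrt{\rho_B}\le\kappa^{-1}\sqrt{q_B}$, is false; exiting through $\partial_+B$ from the origin requires of order $M$ steps in direction $e_1$, so the only uniform lower bound is of order $\kappa^{M}$, and indeed in Sznitman's proof the factor $\kappa^{-M/2}$ in the second term is exactly the worst-case bound $\sqrt{\rho_B}\le\kappa^{-M/2}$, paid on an atypical set of environments, not an ``ellipticity cost for positioning the starting point''. (b) The exponent $-\frac{\bar M}{2}\bigl(p-\frac{7M}{\bar M}\frac{\log\kappa^{-1}}{\log\delta}\bigr)^2_+$ does not come from an Azuma bound on the walk's increments $\Delta(Y_i,\omega)\cdot e_1$. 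Note that $p$ is by definition the probability of an \emph{environment} event, namely that $\Delta(\cdot,\omega)\cdot e_1\ge\gamma_1L$ holds simultaneously on a whole strip $\tilde B_j$; the argument tiles the transverse direction into roughly $\bar M=[M^3/(32H)]$ disjoint strips, uses that the corresponding indicator variables are i.i.d.\ under $\P$ (this is where genuine independence is available), applies Hoeffding to the number of good strips, and separately uses a quenched estimate --- this is where $\delta$ and hence the hypothesis $\delta^{-1}<1$ enter --- to show that on environments with enough good strips the walk cannot exit badly; the correction term $\frac{7M}{\bar M}\frac{\log\kappa^{-1}}{\log\delta}$ arises from converting that quenched estimate into an effective reduction of the good-strip count, not from ``absorbing rare long jumps of an individual $S$-step''. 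Under the annealed law the increments you propose to treat by Azuma are neither independent nor martingale differences, because the walk re-reads the environment it has already visited.

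(c) Your closing paragraph concedes exactly this and proposes to fix it by coupling each increment to ``fresh i.i.d.\ environment'', with the error ``controlled by uniform ellipticity and $\delta^{-1}<1$''. No such coupling is constructed, and there is no reason a per-increment ellipticity cost would aggregate to a benign global factor $\kappa^{-2}$ over $\sim\bar M$ increments; this is precisely the crux that Sznitman's scheme is designed to avoid, by putting all probabilistic input at the level of i.i.d.\ environment events on disjoint slabs and strips and keeping every statement about the walk quenched and deterministic (comparison of the $e_1$-projection with a one-dimensional chain whose back/forward crossing ratios are bounded by the translates of $\hat\rho$, which also is what legitimizes your first-term computation). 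As written, the proposal identifies the right objects but leaves the central renormalization step --- the only non-mechanical part of the proof --- unproved.
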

\begin{remark}
In application, this lemma requires to check that $\E[\hat\rho]<1$ and $p$ is not too small. We will check these conditions in perturbative ways. First, since we assume $\E[d(0)\cdot e_1]>0$, if the fluctuation of $\omega$ is small, then it is reasonable to believe that $G_U[d\cdot e_1](x)>0$ for many points, which morally implies $\hat\rho<1$. Second, for the same reason, $\Delta(x,\omega)$ should be biased in the direction $e_1$ and we may (and will) choose $\gamma_1$ small so that $p$ is close to 1. 
\end{remark}

\subsection{High-level proof of Theorem~\ref{d4}}
\label{conclusion}
In this section, we will have to choose $\epsilon>0$ small as necessary. We write $\epsilon \ll 1$ instead of ``$\epsilon>0$ sufficiently small depending only on $d$ and $r$'', for simplicity. 

Let us write
\begin{equation}
  \label{lambda0def}
 \lambda_0:=\sqrt{r}\sigma_{2r}\epsilon^{2-\frac{1}{\sqrt{r}}}
\end{equation}
for the lower bound on $\lambda:=\E[d(0)\cdot e_1]$ assumed in Theorem~\ref{d4}. It is good to keep in mind that since $\sigma_{2r}$ is bounded, $\lim_{\epsilon\to 0}\lambda_0=0$. Note first that if $\sigma_{2r}\le\epsilon^2$, then 
 for $\epsilon\ll 1$,
\begin{equation*}
 \lambda\ge\sqrt{r}\sigma_{2r}^2\epsilon^{-\frac{1}{\sqrt{r}}} 
\ge 4d(1+9\epsilon) \sigma_2^2
\end{equation*}
since $\sigma_{2r} \ge (2d)^{-\frac{r-1}{r}}\sigma_2$.
By~\cite[Theorem 3]{RS18}, this implies Kalikow's condition and
the random walk is ballistic. We will hence assume that
\begin{equation}
 \sigma_{2r}>\epsilon^2. \label{nottoosmall}
\end{equation}
It remains to show that this and $\lambda\ge\lambda_0$ implies the condition~\eqref{effective}. 

We will use Lemma~\ref{coro} with the parameters
\begin{align}
&M:=\epsilon^{-\frac{1}{\sqrt{r}}}\lambda_0^{-1}
, \quad
L:=
\cst (d,r)\epsilon^{-1}, \label{lm}\\
&H:=M^2
,\quad h:=\epsilon^{-\frac{1}{2\sqrt{r}}}L^2
\quad \textrm{and} \quad 
\gamma_1:=\frac{\cst}{10}\lambda_0 L,\label{hHgchoice}
\end{align}
where the choice of $c_1(d,r)>0$ will be specified in the proof. 
By~\eqref{nottoosmall}, the requirement $M\ge \exp\{100+4d(\log \kappa)^2\}$ in~\eqref{effective} is satisfied when $\epsilon\ll 1$. 
Note also that the above choices of parameters satisfy $2h \le H \le \frac{1}{32}M^3$ in the previous section. 
Furthermore, we can compute 
$$
\frac{\gamma_1 M}{L}=\frac{c_2}{10}\epsilon^{-\frac{1}{\sqrt{r}}}\quad \text{and} \quad
\frac{HL}{2hM}-\frac{4}{\gamma_1}=\frac{1}{c_1\lambda_0}\left(
\frac{1}{2}\epsilon^{1-\frac{1}{2\sqrt{r}}}-\frac{40}{c_2} \epsilon\right).
$$
Using this, we find that there exists a constant $\cst(d,r)>0$ such that
for $\epsilon\ll 1$, 
$$
\delta^{-1}\le c_3(d,r)\exp\left\{-\epsilon^{-\frac{1}{2\sqrt{r}}}\right\}<1. 
$$
Hence from Lemma~\ref{coro}, it follows that for $\epsilon\ll 1$, 
\begin{equation}
\label{e1}
\mathbb E[\sqrt{\rho_B}]
\le {\kappa^{-2}} \frac{\mathbb 
  E[\hat\rho]^{\frac{M}{2L}} }{(1-\E[\hat\rho]^{\frac{1}{2}})_+}
+
2d\exp\left\{M\left[\frac{\log 4d}{2}-50\frac{\log 4d}{\log 2}
    \left(p-\frac{7}{100}\right)_+^2\right]\right\}.
\end{equation}
Therefore, 
if we prove that
\begin{equation}
  \label{prove}
\mathbb E[\hat\rho]\le 1-\frac{1}{10}d\lambda_0 L\quad{\rm and}\quad 
p\ge\frac{3}{4}, 
\end{equation}
then using~\eqref{e1}, we get 
\begin{equation}
  \label{mbound}
\mathbb E[\sqrt{\rho_B}]\le \kappa^{-2}\frac{80}{d\lambda_0 L}
\exp\left\{-\frac{d}{20}\lambda_0 M\right\}
+2d\exp\left\{-M\log 4d\right\}.
\end{equation}
Substituting~\eqref{lm} into~\eqref{mbound} and recalling~\eqref{sqrtrho}
, we can conclude that 
\begin{equation}
\label{pol}
P_0(X_{T_B}\notin\partial_+ B)\le \exp\left\{-\cst(d,r)\epsilon^{-\frac{1}{\sqrt{r}}}\right\},
\end{equation}
for some constant $c_4(d,r)>0$, whenever $\epsilon \ll 1$. This implies~\eqref{effective}. 

Let us verify (\ref{prove}) for $\epsilon \ll 1$. First, by the same arguments as in~\cite[(4.8)]{Sz03}, if we choose $c_1=\epsilon L<\frac{3}{4}$, then 
\begin{equation}
  \label{lower-bounds}
\sup_{\omega\in\Omega_\epsilon}\hat\rho \le 3.
\end{equation}
Thus in order to bound $\E[\hat\rho]$, we only need to prove that with high probability, $G_U[d\cdot e_1](x)$ is uniformly positive on 
\begin{equation*}
\mathcal{H}_M:= \left\{x\in \Z^d \colon x\cdot e_1=0, \sup_{2\le j\le d} |x\cdot e_j|<\frac{1}{4} M^3\right\}.
\end{equation*}
This follows from the following two propositions.  
\begin{proposition} 
\label{prop:mean}
Suppose $d\ge 3$ and~\eqref{eq:assum} holds. There exist constants $\cst(d), \cst(d)>0$ such that if
  $\epsilon\le \frac{1}{8d}$, $L\in [2, c_5(d)/\epsilon]\cap\N$ and 
\begin{equation}
\lambda\ge c_6(d)\sigma^2_2\left(\epsilon\log L+\frac{1}{L}\right),
\label{assum_mean}
\end{equation}
then
  \begin{equation}
    \label{expectation}
\mathbb E[G_U(d\cdot e_1)(0)]\ge\frac{2}{5} d\lambda L^2.
\end{equation}
\end{proposition}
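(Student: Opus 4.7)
My plan is to decompose $G_U^\omega[d\cdot e_1](0)$ into a leading term proportional to $\lambda$ plus a small centered error that the hypothesis~\eqref{assum_mean} is calibrated to absorb. Applying optional stopping to the $P_{0,\omega}$-martingale $X_n\cdot e_1 - \sum_{k=0}^{n-1}d(X_k,\omega)\cdot e_1$ at $T_U$ (whose $P_{0,\omega}$-expectation is finite thanks to uniform ellipticity and the slab geometry) yields
\[
G_U^\omega[d\cdot e_1](0) = E_{0,\omega}\bigl[X_{T_U}\cdot e_1\bigr],
\]
so that writing $d(y,\omega)\cdot e_1 = \lambda + \tilde d(y)\cdot e_1$, with $\tilde d(y)\cdot e_1$ centered and $\sigma(\omega(y))$-measurable, and introducing the quenched Green function $g_U(x,y,\omega) := E_{x,\omega}\bigl[\sum_{n=0}^{T_U - 1}\ind_{\{X_n = y\}}\bigr]$, one obtains the basic decomposition
\[
\E\bigl[G_U^\omega[d\cdot e_1](0)\bigr] = \lambda\,\E[T_U] + \sum_{y\in U}\E\bigl[g_U(0,y,\omega)\,\tilde d(y)\cdot e_1\bigr].
\]

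For the main term I compare the $e_1$-projection of $X$ with a lazy one-dimensional symmetric walk of step variance $\tfrac{1}{d}(1 + O(\epsilon))$, and apply optional stopping to $(X_n\cdot e_1)^2 - \tfrac{n}{d}(1 \pm C\epsilon)$ at $T_U$, to obtain $\E[T_U] \ge (1 - C(d)\epsilon)\,dL^2$; hence, for $\epsilon L \le c_5(d)$ sufficiently small, $\lambda\,\E[T_U] \ge \tfrac{9}{10}d\lambda L^2$, comfortably above the target $\tfrac{2}{5}d\lambda L^2$.

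The heart of the proof is the error estimate. For each $y$ I would apply the resolvent identity
\[
g_U(0,y,\omega) - g_U(0,y,\omega^{(y)}) = g_U(0,y,\omega^{(y)})\sum_{e\in V}\tilde\delta(y,e)\,g_U(y+e,y,\omega),
\]
where $\omega^{(y)}$ agrees with $\omega$ off $\{y\}$ and takes the mean value $\E[\omega(0,\cdot)]$ at $y$. Since $g_U(0,y,\omega^{(y)})$ is independent of $\omega(y)$ and $\tilde d(y)\cdot e_1$ is centered, the leading piece contributes nothing to the covariance; iterating the identity once more to strip the residual $\omega(y)$-dependence from $g_U(y+e,y,\omega)$ and invoking the cancellation $\sum_e\E[\tilde\delta(0,e)\,\tilde d(0)\cdot e_1] = 0$ (which follows from $\sum_e\tilde\delta(0,e) = 0$) reduces the contribution of each $y$ to a discrete $e_1$-gradient of $g_U$ times $\sigma_2^2$. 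Summing over $y\in U$ with sharp slab Green function estimates then yields
\[
\Bigl|\sum_{y\in U}\E\bigl[g_U(0,y,\omega)\,\tilde d(y)\cdot e_1\bigr]\Bigr| \le C(d)\,\sigma_2^2\bigl(\epsilon\log L + 1/L\bigr)\,dL^2,
\]
which, together with~\eqref{assum_mean}, is at most $\tfrac{1}{2}\cdot\tfrac{9}{10}d\lambda L^2$, completing the proof of~\eqref{expectation}.

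The main obstacle is achieving the sharp error bound. A crude Cauchy--Schwarz on the covariance, using only $|\tilde d(0)\cdot e_1| \le \epsilon$, gives an error of order $\epsilon\sigma_2\,dL^2$ and forces the much stronger requirement $\lambda\gtrsim\epsilon^2$; the improvement to $\sigma_2^2$ requires invoking the resolvent identity twice to exploit the centering of $\tilde\delta(y,e)$ twice, and the further gain of $(\epsilon\log L + 1/L)$ requires the cancellation $\sum_e\E[\tilde\delta(0,e)\tilde d(0)\cdot e_1] = 0$ combined with sharp estimates on discrete gradients of slab Green functions. This is precisely the computation carried out in~\cite{RS18}, which I would adapt with only cosmetic changes.
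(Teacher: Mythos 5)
The paper does not prove this proposition; it dismisses it in one line as ``nothing but [RS18, Proposition~7]''. Your proposal is a faithful reconstruction of the strategy of that reference — optional stopping of the $e_1$-coordinate martingale to identify $G_U[d\cdot e_1](0)$, splitting off $\lambda\,E_{0,\omega}[T_U]$, and bounding the centered error by a double application of the single-site resolvent identity so that the first moment of $\tilde\delta(y,\cdot)$ cancels and a second centering ($\sum_e \tilde\delta(0,e)=0$) turns the remaining covariance into a gradient of the slab Green function times $\sigma_2^2$ — and you explicitly flag that you would be following [RS18]. So this matches the approach the paper delegates to.

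One small inexactness worth fixing if you write this up: in the supermartingale comparison for $E_{0,\omega}[T_U]$, the correction to $(X_n\cdot e_1)^2 - \tfrac{n}{d}$ is not $O(\epsilon)$ per step but $O(L\epsilon)$, because the cross term $2(X_n\cdot e_1)\,d(X_n,\omega)\cdot e_1$ contributes up to $O(L\epsilon/d)$ each step. This is harmless — the hypothesis $L\le c_5(d)/\epsilon$ is there precisely so that $L\epsilon$ is small, and you invoke it — but the displayed $\pm C\epsilon$ should read $\pm C L\epsilon$. With that adjustment, and taking $c_5(d)$ small enough that $E[T_U]\ge \tfrac{9}{10}dL^2$, your bookkeeping $\tfrac{9}{10} - \tfrac{9}{20} > \tfrac{2}{5}$ goes through and matches the $\tfrac{2}{5}d\lambda L^2$ in the statement.
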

\begin{proposition}
\label{prop:concentration}
Suppose $d\ge 4$ and $r\ge 2$ even. Then, there exists a constant 
 $\cst(d,r)>0$ such that if $L\le c_7(d,r)\epsilon^{-1}$, then
\begin{equation}
  \label{variance}
    \mathbb P\left(
|G_U[d\cdot e_1](0)-\mathbb E[G_U[d\cdot e_1](0)]|
\ge u\right) 
\le 
\begin{cases}
   \left(c_7(d,r) r\right)^{r} L
\left(\frac{\sigma_{2r}}{u}\right)^{2r} &\ {\rm for}\ d=4,\\
\left(c_7(d,r) r\right)^{r}
\left(\frac{\sigma_{2r}}{u}\right)^{2r} &\ {\rm for}\ d\ge 5.
\end{cases}
\end{equation}
for all $u\ge 0$. 
\end{proposition}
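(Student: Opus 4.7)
The plan is to apply the high-moment Efron--Stein inequality of~\cite{BBLM05} to $Z:=G_U[d\cdot e_1](0)$, viewed as a function of the independent family $(\omega(y))_{y\in U}$; since the walk is killed at $\partial U$, $Z$ depends only on these coordinates. For each $y\in U$ let $\omega'(y)$ be an independent copy of $\omega(y)$, and write $Z^{[y]}$ for $Z$ evaluated on the configuration in which $\omega(y)$ is replaced by $\omega'(y)$. The BBLM moment bound then reads
$$
\E\bigl[(Z-\E Z)^{2r}\bigr]\le (Cr)^r\,\E\bigl[W^{r}\bigr],\qquad W:=\sum_{y\in U}\E^{[y]}\bigl[(Z-Z^{[y]})^{2}\bigr],
$$
where $\E^{[y]}$ denotes the expectation over $\omega'(y)$ only. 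The tail estimate~\eqref{variance} then follows from Markov's inequality provided $\E[W^{r}]\le (Cr)^{r}T_{d}(L)\,\sigma_{2r}^{2r}$ with $T_4(L)=L$ and $T_d(L)=1$ for $d\ge 5$.

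The single-site difference $Z-Z^{[y]}$ is computed by the resolvent identity for Green operators that differ only at $y$:
$$
Z-Z^{[y]}=G_U^{\omega^{[y]}}(0,y)\sum_{e\in V}\bigl(\omega(y,e)-\omega'(y,e)\bigr)G_U^\omega[d\cdot e_1](y+e).
$$
Since $\sum_{e\in V}(\omega(y,e)-\omega'(y,e))=0$, the inner sum can be rewritten with the discrete gradient $G_U^\omega[d\cdot e_1](y+e)-G_U^\omega[d\cdot e_1](y)$, and then Cauchy--Schwarz gives
$$
(Z-Z^{[y]})^{2}\le G_U^{\omega^{[y]}}(0,y)^{2}\,\bigl\|\omega(y)-\omega'(y)\bigr\|_{2}^{2}\,\bigl\|\nabla G_U^\omega[d\cdot e_1](y)\bigr\|_{2}^{2}.
$$
Applying Minkowski in the form $\|W\|_{r}\le\sum_{y\in U}\|\E^{[y]}[(Z-Z^{[y]})^{2}]\|_{r}$, the problem reduces to (i) bounding the $L^{2r}$-norm of $\|\omega(y)-\omega'(y)\|_2$ by $C\sigma_{2r}$ via $\E[|\omega(y,e)-\omega'(y,e)|^{2r}]\le C\,\E[\tilde\delta(y,e)^{2r}]$ summed over $e\in V$, and (ii) a deterministic weighted Green-function sum on $\Omega_\epsilon$.

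Step (ii) is the heart of the proof. Uniform ellipticity on $\Omega_\epsilon$ (for $\epsilon\ll 1$) gives a pointwise comparison $G_U^\omega(0,y)\le C(d)\,G_U^{\mathrm{SRW}}(0,y)$, reducing matters to the classical slab estimate
$$
\sum_{y\in U}G_U^{\mathrm{SRW}}(0,y)^{2}\le \begin{cases}C(d)\,L,&d=4,\\ C(d),&d\ge 5,\end{cases}
$$
which reflects the borderline behaviour of the Green-function $\ell^{2}$-sum in $d=4$. The main obstacle I expect is absorbing the gradient factor $\|\nabla G_U^\omega[d\cdot e_1](y)\|_{2}^{2}$ without losing powers of $L$: a naive bound by $\|G_U^\omega[d\cdot e_1]\|_\infty^{2}\le 4L^{2}$ (coming from the martingale identity $G_U^\omega[d\cdot e_1](x)=\E_{x,\omega}[(X_{T_U}-x)\cdot e_1]$) would cost an extra $L^{2}$ and spoil the statement, so the sum-to-zero cancellation $\sum_{e}(\omega-\omega')(y,e)=0$ used above is essential in order to trade supremum norms for gradients. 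Once $\E[W^{r}]\le (C(d,r)\,r)^{r}T_d(L)\sigma_{2r}^{2r}$ is established, Markov's inequality applied to the BBLM bound yields~\eqref{variance}.
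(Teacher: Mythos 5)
Your proposal follows essentially the same route as the paper: apply the BBLM high-moment Efron--Stein inequality to $Z = G_U[d\cdot e_1](0)$ as a function of $(\omega(y))_{y\in U}$, reduce to bounding the single-site increments $Z-Z^{[y]}$ by a Green-function factor $g_{0,U}(0,y)$ times the environment fluctuation at $y$, sum the squared Green function over the slab, and apply Markov. The paper proceeds in exactly this way (using Minkowski and H\"older to pass from $\E[V_+^r]$ to a sum over sites).

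However, there is a genuine gap exactly where you flag ``the main obstacle.'' You reduce the increment bound to controlling the discrete gradient $\|\nabla G_U^\omega[d\cdot e_1](y)\|_2$ \emph{uniformly}, but never establish that this gradient is $O(1)$. This is not a routine fact: the paper does \emph{not} prove a clean uniform gradient bound. Instead it quotes Sznitman's estimate \cite[(3.47) and Proposition~2.3]{Sz03}, which in the notation of the paper reads
\begin{equation*}
|\Gamma_n(\omega,\omega')|^2\le c(d)\, g_{0,U}(0,x_n)^{\frac{2}{2-\alpha}}\Bigl(\sum_{e\in V}|\omega(x_n,e)-\omega'(x_n,e)|\Bigr)^2,
\end{equation*}
valid only under the constraint $L< c(d)\frac{1-\alpha}{2-\alpha}\epsilon^{-1}$. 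Note the interpolation parameter $\alpha\in(0,1)$: taking $\alpha\to 1$ would give the clean $g_{0,U}^2$ bound that your argument needs, but the admissible range of $L$ then degenerates to nothing. The paper balances this by choosing $\alpha=1-\tfrac{1}{4r}$, paying for it with a residual factor $L^{4(1-\alpha)/(2-\alpha)}=L^{4/(4r+1)}$ in the $d=4$ Green-function sum (which is why the proposition's $d=4$ bound carries an $L$, rather than the $\log L$ you would obtain from $\sum_{y\in U}g_{0,U}(0,y)^2$). Without either proving a uniform gradient bound on $\Omega_\epsilon$ in the regime $L\lesssim\epsilon^{-1}$, or invoking Sznitman's interpolation estimate, your argument does not close. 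Apart from that single ingredient, your outline matches the paper's proof.
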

We postpone the justification of these propositions to the next section and complete the proof of~\eqref{prove} first. By using $\sigma_2\le \min\{(2d)^{\frac{r-1}{r}}\sigma_{2r}, \sqrt{2d}\epsilon\}$, one can verify that the assumption~\eqref{assum_mean} holds under~\eqref{eq:assum}. 
Thus we can use (\ref{expectation}) and (\ref{variance}) to deduce the following bound on the deviation probability for $\epsilon \ll 1$:
\begin{equation}
\label{dev}
\begin{split}
D:&=\mathbb P\left(\inf_{x\in \mathcal{H}_M
}
                    G_U[d\cdot e_1](x)
                    \le \frac{1}{5}d\lambda_0 L^2
                    \right)\\
&\stackrel{\mathclap{(\ref{expectation})}
}{\le}
\hspace{5pt}\mathbb P\left(
\inf_{x\in \mathcal{H}_M
}
                    \left(
G_U[d\cdot e_1](x)-\mathbb E[G_U[d\cdot e_1](x)]
                    \right)
                    \le-\frac{1}{5}d\lambda_0 L^2
                    \right)\\
                    &
                    \le
  2\left(\frac{M^3}{2}\right)^{d-1}
    \mathbb P\left(
    G_U[d\cdot e_1](x)-\mathbb E[G_U[d\cdot e_1](x)]
    \le-\frac{1}{5}d\lambda_0 L^2
    \right)\\
 &
  \stackrel{\mathclap{(\ref{variance}), (\ref{lambda0def})}
}{\le}
\hspace{15pt}
    2\left(\frac{M^3}{2}\right)^{d-1}\left(\frac{25 c_7(d)}{
        d^2}\right)^r L\epsilon^{2\sqrt{r}}\\
    &
    \overset{(\ref{nottoosmall})}{\le}
 \cst(d)^{r}\epsilon^{\sqrt{r}}\lambda_0 L,
\end{split}
\end{equation}
where in the last inequality we have used the condtion $r\ge 144d^2$ of Theorem~\ref{d4}, (\ref{nottoosmall}) and
the definition of $\lambda_0$ in   (\ref{lambda0def}).
From this and~\eqref{lower-bounds}, it follows that
\begin{equation*}
\begin{split}
\mathbb E[\hat\rho]
& \le\frac{1-\frac{1}{5}d\lambda_0
                    L}{1+\frac{1}{5}d\lambda_0 L}+
                    3D\\
  &
\le    1-\frac{1}{10}d\lambda_0 L.
\end{split}
\end{equation*}
Let us turn to prove that $p\ge \frac{3}{4}$. By the same argument as in~\cite[(4.12)]{Sz03} (see also~\cite[(2.10)]{Sz03}), one can prove that 
\begin{align*}
|\Delta(0,\omega)\cdot e_1-G_U(d\cdot e_1)(0)|
\le \frac{1}{5}d\lambda_0 L^2.
\end{align*}
Then by using (\ref{expectation}) and (\ref{variance}) again, we obtain
through a computation similar to the previous one that 
\begin{equation*}
\begin{split}
 p&\ge 1-\sup_{j\ge 2} M^{2d}\mathbb P\left(
 G_U[d\cdot e_1]-\mathbb E[G_U[d\cdot e_1](0)]\le-\frac{1}{10}
 d\lambda_0 L^2\right)\\
 &\ge 1-M^{2d} \cst(d)^r \epsilon^{\sqrt{r}}\lambda_0 L\\
 &\ge 1-\frac{1}{10}d\lambda_0 L\\
 &\ge\frac{3}{4},
\end{split}
\end{equation*}
where we have again used the condition $r\ge 144d^2$ in the third inequality.
This completes the proof of~\eqref{prove} and hence Theorem~\ref{d4}.  

\subsection{Lower bound on $G_U[d\cdot e_1](0)$}
\label{lowergreen}
In this section, we justify two propositions left unproved in the last section. 
First, Proposition~\ref{prop:mean} is nothing but~\cite[Proposition~7]{RS18}, which is an improvement of~\cite[Proposition~3.1]{Sz03}. 

So we only provide a proof of Proposition~\ref{variance}.
We will use an inequality of Boucheron, Bousquet, Lugossi and
Massart~\cite[Theorem 2]{BBLM05} (see also
the monograph~\cite[Theorem~15.5]{BLM13}), whose statement we reproduce here. 


\begin{theorem}
  \label{blm}
  For $N\ge 1$, let $X_1,\ldots,X_N$ be independent random
  variables taking values in a set $\mathcal X$, with
  laws $\mu_1,\ldots,\mu_N$ respectively,
  and $f:\mathcal X\to\mathbb R$ a measurable function. Let
$Z:=f(X_1,\ldots,X_N)$,  
  $$
  V_+:=\sum_{n=1}^{N}E'[(Z-Z'_n)_+^2]\quad \text{and}\quad
  V_-:=\sum_{n=1}^{N}E'[(Z-Z'_n)_-^2],
  $$
  where $X'_1,\ldots,X'_n$ are independent copies of $X_1,\ldots,X_n$ and
  $Z'_n$ is obtained from $Z_n$ by replacing $X_n$ by $X'_n$, while
  $E'$ denotes expectation with respect to the $(X'_1,\dotsc,X'_N)$ variables only. 
  Then, for any real $q\ge 2$, we have that 
  \begin{equation}
    \label{esq}
\|Z-E[Z]\|_q\le\sqrt{\frac{\sqrt{\text{\upshape{e}}}}{\sqrt{\text{\upshape{e}}}-1} q}\left(\|{V_+}\|_{q/2}^{1/2}+\|{V_-}\|_{q/2}^{1/2}\right),
  \end{equation}
where $\text{\upshape{e}}$ denotes Napier's constant.
\end{theorem}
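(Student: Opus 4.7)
The plan is to prove Theorem~\ref{blm} by the entropy method, which extends the classical Efron--Stein inequality from the variance to arbitrary moments in the spirit of Ledoux and its sharpening by Boucheron--Lugosi--Massart. As a warm-up I would first settle the case $q=2$, which is the classical Efron--Stein inequality $\operatorname{Var}(Z)\le\E[V_\pm]$: decompose $Z-\E[Z]=\sum_i D_i$ into Doob martingale differences $D_i:=\E[Z\mid\mathcal F_i] - \E[Z\mid\mathcal F_{i-1}]$ with $\mathcal F_i=\sigma(X_1,\dots,X_i)$, and use conditional Jensen's inequality to bound $\E[D_i^2\mid\mathcal F_{i-1}]\le\E_{X'_i}[(Z-Z'_i)_+^2\mid\mathcal F_{i-1}]$; summing and taking expectations yields the variance bound.

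For the extension to general $q\ge 2$, the core step is a modified log-Sobolev inequality. First I would tensorize the entropy functional $\operatorname{Ent}(f):=\E[f\log f]-\E[f]\log\E[f]$ via
\[ \operatorname{Ent}(e^{\lambda Z}) \;\le\; \sum_i \E\bigl[\operatorname{Ent}_{X_i}(e^{\lambda Z})\bigr], \]
where $\operatorname{Ent}_{X_i}$ denotes entropy with respect to $X_i$ alone (conditionally on the other coordinates). Then the local bound $\operatorname{Ent}_{X_i}(e^{\lambda Z})\le(\lambda^2/2)\E_{X_i}[(Z-Z'_i)_+^2 e^{\lambda Z}]$, valid for $\lambda\ge 0$ via a convex-duality argument based on the Donsker--Varadhan variational formula, yields the modified log-Sobolev inequality $\operatorname{Ent}(e^{\lambda Z})\le(\lambda^2/2)\E[V_+ e^{\lambda Z}]$. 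Differentiating in $\lambda$ and extracting moment information by expansion, I would obtain a recursive inequality of the form
\[ \|(Z-\E Z)_+\|_q \;\le\; \|(Z-\E Z)_+\|_{q-1} + C\sqrt q\, \|V_+\|_{q/2}^{1/2}, \]
which upon iteration from $q=2$ upward and summation of the resulting geometric series gives the one-sided bound $\|(Z-\E Z)_+\|_q\le\sqrt{\kappa q\|V_+\|_{q/2}}$ with $\kappa=\sqrt{\mathrm{e}}/(\sqrt{\mathrm{e}}-1)$. The symmetric argument with $V_-$ controls $\|(Z-\E Z)_-\|_q$, and the triangle inequality combines the two into the statement of the theorem.

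The hard part will be the local log-Sobolev step: while tensorization of entropy is a standard consequence of the product structure, the sharp one-sided local inequality requires a subtle convex-analytic and duality input (to handle the asymmetry between $V_+$ and $V_-$), and optimizing this step is precisely what produces the explicit constant $\kappa=\sqrt{\mathrm{e}}/(\sqrt{\mathrm{e}}-1)$. A secondary technical issue is the passage from the modified log-Sobolev inequality to the recursive moment bound, which involves a careful Taylor expansion of $\lambda\mapsto\log\E[e^{\lambda(Z-\E Z)}]$ near $\lambda=0$ and bookkeeping of the leading-order terms so that the induction closes with a constant independent of~$q$.
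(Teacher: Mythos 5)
First, a point about what you're comparing against: the paper does not prove Theorem~\ref{blm} at all; it is quoted verbatim from Boucheron--Bousquet--Lugosi--Massart~\cite{BBLM05} (equivalently \cite[Theorem~15.5]{BLM13}) and used as a black box. So there is no in-paper proof to match, and the only meaningful comparison is with the BBLM argument itself.

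Your proposal correctly identifies the right family of ideas (subadditivity of an entropy-like functional across independent coordinates, followed by an iteration in $q$), but the specific route you choose has a genuine gap. The Herbst-style argument you outline, with tensorization of $\operatorname{Ent}(e^{\lambda Z})$ and a local inequality $\operatorname{Ent}_{X_i}(e^{\lambda Z})\le(\lambda^2/2)\E_{X_i}[(Z-Z'_i)_+^2 e^{\lambda Z}]$, presupposes that $e^{\lambda Z}$ is integrable. The theorem makes no such assumption; it is precisely a polynomial-moment statement designed to apply when exponential moments may fail. BBLM instead prove a $\Phi$-entropy subadditivity inequality directly for $\Phi(x)=x^{q/(q-1)}$ (i.e., for the $q$-th moment functional, not for $\E[e^{\lambda Z}\log e^{\lambda Z}]$), and they never pass through the moment generating function. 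This is not a cosmetic difference: there is no known way to extract the clean $\sqrt{q}$ bound of \eqref{esq} by Taylor-expanding $\lambda\mapsto\log\E[e^{\lambda(Z-\E Z)}]$ at $0$, and the ``careful bookkeeping'' you flag as the secondary difficulty is in fact where the argument would break.

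There is also a concrete error in the proposed iteration. From $\|(Z-\E Z)_+\|_q\le\|(Z-\E Z)_+\|_{q-1}+C\sqrt{q}\,\|V_+\|_{q/2}^{1/2}$ you would obtain, after telescoping from $q=2$, a bound of the form $\|(Z-\E Z)_+\|_q\le\|(Z-\E Z)_+\|_1+C\sum_{k=2}^q\sqrt{k}\,\|V_+\|_{k/2}^{1/2}$. Since $\|V_+\|_{k/2}$ is nondecreasing in $k$, the sum is of order $q^{3/2}\|V_+\|_{q/2}^{1/2}$, not $\sqrt{q}\,\|V_+\|_{q/2}^{1/2}$, so this recursion cannot produce the stated rate. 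In the BBLM proof the recursion is set up on a rescaled quantity (roughly $m_q^2/(q\|V_+\|_{q/2})$, with $m_q=\|(Z-\E Z)_+\|_q$), and the one-step contraction factor is $(1-1/q)^{q/2}\le e^{-1/2}$; it is the geometric series $\sum_{k\ge 0}e^{-k/2}=\sqrt{e}/(\sqrt{e}-1)$ that produces the explicit constant in \eqref{esq}. Your proposal does not yield any geometric decay, so the constant cannot emerge the way you describe.
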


\begin{proof}[Proof of Proposition~\ref{prop:concentration}] 
We will follow the proof of~\cite[Proposition 3.2]{Sz03}
  and of~\cite[Proposition 10]{RS18}. Let us enumerate the elements
  of $U$ as $U=\{x_n\colon n\in\mathbb N\}$.
    We will apply Theorem~\ref{blm} to the random variable
    $Z:=G_U[d\cdot e_1]$, which is a measurable function of the
    independent 
    random variables $\{X_n:=\omega(x_n)\colon n\in\N\}$. 
    Let us first note that by the Cauchy-Schwarz inequality, 
\begin{equation}
\begin{split}
 \E\left[V^r_+\right]&=
 \sum_{i_1\in\mathbb N}\cdots \sum_{i_r\in\mathbb N}\E[\E'[(Z-Z_{i_1})^2_+]\cdots 
                    \E'[(Z-Z_{i_r})^2_+]]\\
  &
    \le
    \sum_{i_1\in\mathbb N}\cdots \sum_{i_r\in\mathbb N}
    \E\left[\left(\E'[(Z-Z'_{i_1})_+^2\right)^{r}\right]^{1/r}
    \cdots \E\left[\left(\E'[(Z-Z'_{i_r})_+^2\right)^{r}\right]^{1/r}\\
  &
    =\left(\sum_{n\in\mathbb N} \E\left[\left(\E'[(Z-Z'_{n})_+^2\right)^{r}\right]^{1/r}\right)^r. 
 \end{split}
  \label{vz}
\end{equation}
We will now obtain an upper bound for the right-most expression
of~\eqref{vz}.
    For each $n$ and all environments
    $\omega,\omega'\in\Omega_\epsilon$
    with $\omega=\omega'$ off $x_n$, we define
    $$
    \Gamma_n(\omega,\omega'):=G_U[d\cdot e_1](0,\omega)
    -G_U[d\cdot e_1](0,\omega').
    $$
    Using Minkowski's integral inequality and Jensen's inequality, we see that
\begin{equation}
 \begin{split}
  \E\left[\left(\E'[(Z-Z'_{n})_+^2\right)^{r}\right]^{1/r}
&=
    \left(\int \left(\int 
        (\Gamma_n)_+^2(\omega,\omega')\mu(\dd\omega'(x_n)) 
                        \right)^r \mathbb P(\dd\omega)\right)^{1/r}\\
      & \le
        \int \left(\int 
        (\Gamma_n)_+^{2r}(\omega,\omega')\mathbb P (\dd\omega)
        \right)^{1/r}\mu(\dd\omega'(x_n))\\
&        \le
       \left( \iint 
        \Gamma_n^{2r}(\omega,\omega')\mathbb P(\dd\omega)\, 
                \mu(\dd\omega'(x_n)) \right)^{1/r}.
 \end{split}
      \label{dev1}
\end{equation}    
On the other hand, as in~\cite[(3.47)]{Sz03}, we have that for all
$\alpha\in (0,1)$,
\begin{equation}
    |\Gamma_n(\omega,\omega')|^2\le
    {\cst(d)}g_{0,U}(0,x_n)^{\frac{2}{2-\alpha}}\left(
      \sum_{e\in U}|\omega(x_n,e)-\omega'(x_n,e)|\right)^2
      \label{gamma}
\end{equation}
under the condition $L<\cst(d)\frac{1-\alpha}{2-\alpha}\epsilon^{-1}$ in~\cite[Proposition~2.3]{Sz03}, where $g_{0,U}$ denotes the Green function for the simple symmetric random walk killed upon exiting $U$. We set $\alpha:=1-\frac1r$. Hence the constants depending on $\alpha$ below actually depending on $r$. 
 Substituting~\eqref{gamma} into~\eqref{dev1}, we see that
    \begin{equation}
\begin{split}
   &\E\left[\left(\E'[(Z-Z'_{n})_+^2]\right)^{r}\right]^{1/r}\\
   & \quad\le
    \cst(d)g_{0,U}(0,x_n)^{\frac{2}{2-\alpha}}
    \left(\iint \left(\sum_{e\in
      U}|\tilde\delta(x_n,e)-
    \tilde\delta'(x_n,e)|\right)^{2r}\mu(\dd\omega(x_n))
        \mu(\dd\omega'(x_n))\right)^{1/r}
      \\
      &\quad
        \le
        2(4d)^2    c_{12}(d)g_{0,U}(0,x_n)^{\frac{2}{2-\alpha}}
        \sigma_{2r}^2.
\end{split}    
 \label{lastt}
\end{equation}
    Now, as explained in the proof of~\cite[Proposition
    3.2]{Sz03}, for $\alpha>\frac{4}{5}$, there exist
    constants $\cst(d,\alpha), \cst(d,\alpha)>0$ such that for $L\le
    c_{13}(d,\alpha)\epsilon^{-1}$, 
    $$
    \sum_{n\in\mathbb N}g_{0,U}(0,x_n)^{\frac{2}{2-\alpha}}\le
    \begin{cases}
    c_{14}(d,\alpha) L^{4\frac{1-\alpha}{2-\alpha}}
    & {\rm for} \ d=4,\\
    c_{14}(d,\alpha) & {\rm for} \ d\ge 5.
    \end{cases}
    $$
    Combining the above with (\ref{lastt}) and
     (\ref{vz}), we get that for all $\alpha\in (0,1]$, whenever $L\le 
        c_{13}(d,\alpha)\epsilon^{-1}$, 
\begin{equation}
    \label{vest}
    \|V_+\|_r \le
\begin{cases}
      \cst(d,\alpha) 
      L^{4\frac{1-\alpha}{2-\alpha}}
      \sigma_{2r}^2  &\ {\rm for}\ d=4,\\
      c_{15}(d,\alpha) 
      \sigma_{2r}^2  &\ {\rm for}\ d\ge 5,
  \end{cases}
\end{equation}
for some $c_{15}(d,\alpha)>0$. Recalling $\alpha=1-\frac{1}{4r}$ and applying Theorem \ref{blm} and estimate~\eqref{vest}, one can deduce~\eqref{variance}. 
\end{proof}

\section{Concluding remark}
Although the main theme of this work is to prove ballisticity under a \emph{stochastically} small perturbation, we still need a \emph{uniform} control on the perturbation. More precisely, it is~\eqref{lower-bounds}, Proposition~\ref{prop:mean} and \eqref{gamma} that require the uniform control. 
This is because they rely on the deterministic bounds on the Green functions proved in~\cite[Section~2]{Sz03}, which holds uniformly in $\omega\in\Omega_\epsilon$ for small $\epsilon$. 

It would be desirable to find suitable bounds on the Green functions associated with RWRE that hold with high probability, only assuming that the perturbation is stochastically small. 


\section*{Acknowledgments}
\noindent
This work was supported by the Research Institute for Mathematical Sciences, an International Joint Usage/Research Center located in Kyoto University. Ryoki Fukushima was supported by JSPS KAKENHI Grant Number 16K05200. 
A.F.~Ram\'\i rez was supported by Iniciativa Cient\'\i fica Milenio, Fondo Nacional de Desarrollo Cient\'\i fico y Tecnol\'ogico 1180259 and JSPS KAKENHI Grant Number JP17H01093. A.F.~Ram\'irez thanks the Technische Universit\"at M\"unchen, where part
of this work was done, for its support.

\end{document}